\newcommand{\cat}{\operatorname{cat}\,}
\newcommand{\indcocat}{\operatorname{indcocat}\,}
\newtheorem{theorem}{Theorem}[section]
\newtheorem{lemma}[theorem]{Lemma}
\newtheorem{corollary}[theorem]{Corollary}
\newtheorem{proposition}[theorem]{Proposition}
\newtheorem{remark}[theorem]{Remark}
\newtheorem{definition}[theorem]{Definition}
\theoremstyle{plain}
\title{Inductive LS cocategory and localisation}
\author{C.~Costoya}
\address[C.~Costoya]{Departamento de Matem{\'a}ticas,
Universidade da Coru{\~n}a, Campus de Elvi{\~n}a, s/n, 15071 - A Coru{\~n}a, Spain.}
\email[C.~Costoya]{cristina.costoya@udc.es}
\author{A.~Viruel}
\address[A.~Viruel]{
Departamento de {\'A}lgebra, Geometr{\'\i}a y Topolog{\'\i}a,
Universidad de M{\'a}\-la\-ga, Apdo correos 59, 29080 M{\'a}laga,
Spain.}
\email[A.~Viruel]{viruel@agt.cie.uma.es}
\begin{document}

\maketitle
\begin{abstract}
In this paper we prove that the inductive cocategory of a nilpotent $CW$-complex of finite type $X$, $\indcocat X$, is bounded above by an expression involving the inductive cocategory of the $p$-localisations of $X$. Our arguments can be dualised to LS category improving previous results by Cornea and Stanley. Finally, we show that the inductive cocategory is generic for $1$-connected $H_0$-spaces of finite type.
\end{abstract}


\section{Introduction}

In this note, we study
the inductive cocategory of a pointed $CW$-complex $X$,  $\indcocat X$ \cite{Ga1}. We discuss the problem of finding an upper bound for $\indcocat X$ when  $\indcocat X_{(p)}$,  the inductive cocategory of the $p$-loca\-li\-sa\-tion of $X$, is known for every prime $p$. In this work, $X$ is thought of as an arithmetic square, that is, $X$ is the homotopy pullback of the following arrows
\begin{equation*}
\xymatrix{
X_{(0)} \ar[rr]^-{({{\{e_{(p)}\}}_p})_{(0)}}  &&( \Pi_p X_{(p)})_{(0)} && \Pi_p X_{(p)} \ar[ll]_-{e_{(0)}}    }
\end{equation*}
where
$\xymatrix{{e_{(p)}} :X \ar[r]&  X_{(p)}}$ denotes the $p$-localisation, $ \Pi_p X_{(p)} $ the local expansion of $X$, the arrow  pointing to the right
denotes the rationalisation of  the ${\{e_{(p)}\}}_p$ and the arrow pointing to the left is the rationalisation of the local expansion of $X$. This is always possible when $X$ is nilpotent  \cite[Theorem 4.1, Example 2.6]{DDK}.

This problem has been first raised about $\cat X$, the Lusternik-Schnirel\-mann category of $X$.
Namely, Toomer \cite{Too} motivated by his Theorem 4 asserting that for simply connected $CW$-complexes $\cat X_{(p)} \leq \cat X$, raises the question (suggested by P. Hilton) of when $\cat X$ is the supremum of $\cat X_{(p)}$ over all the primes. Stanley \cite{Sta} improving a result of Cornea \cite{Cor} shows that
\begin{equation}\label{2m}
\cat  X \leq 2\operatorname{sup} _p  \{  \cat X_{(p)} \}
\end{equation}
for finite type $1$-connected $CW$-complexes.
This inequality is not known to be sharp for spaces verifying $\operatorname{sup}_p \{\cat X_{(p)} \}=m >1 $. For $m=1$,
 Roitberg \cite{Roi1}  constructs an infinite space  such that $X_{(p)}$ is a co-$H$-space for every $p$ (thus $ \cat X_{(p)} = 1 $) though $\cat X = 2$.
  In his (failed) attempt to show that (\ref{2m}) is sharp for any $m>1$, Roitberg points out relevant examples of infinite spaces verifying $\cat X = m+1$.  We
 believe that the stronger inequality
 \begin{equation}\label{m}
 \cat  X \leq \operatorname{sup} _p  \{  \cat X_{(p)} \} + 1
\end{equation}
holds for almost every ``common'' space.
Moreover, if we add the assumption on $X$ to be finite, no examples at all are known to verify that (\ref{m}) is sharp.
Indeed, the first author  \cite{Cos3} proves  that if $ \operatorname{sup}_p \{\cat X_{(p)} \}=1 $, then necessarily $\cat X=1$, so Roitberg example above cannot be reproduced for finite spaces.

In this paper we find an upper bound,  for  the inductive cocategory of nilpotent finite type $CW$-complexes, of the same flavor as (\ref{2m}).
\begin{theorem}\label{th-1}
Let $X$ be a finite type nilpotent $CW$-complex, then
\begin{equation*}
\indcocat X\leq \operatorname{sup}_p\{\indcocat X_p\}+\indcocat X_0.
\end{equation*}
\end{theorem}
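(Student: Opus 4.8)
The plan is to exploit the arithmetic square description of $X$ as the homotopy pullback of the rationalised maps, and to bound the inductive cocategory of a pullback in terms of the inductive cocategories of the pieces. The key structural fact I would use is a Mayer--Vietoris style estimate for $\indcocat$: if $X$ is the homotopy pullback of $A\to C\leftarrow B$, then $\indcocat X$ should be controlled by $\indcocat A$, $\indcocat B$, and the way the maps interact with the Postnikov/cocategory filtration of $C$. Since $\indcocat$ is defined via a tower of principal fibrations (the ``inductive'' or Ganea-type cocategory of \cite{Ga1}), the natural move is to build a cocategory tower for $X$ out of the towers for $A=X_{(0)}$, $B=\Pi_p X_{(p)}$, and $C=(\Pi_p X_{(p)})_{(0)}$, and to count how many fibration stages are needed.

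First I would set up the comparison at the local level: show that $\indcocat(\Pi_p X_{(p)})=\operatorname{sup}_p\{\indcocat X_{(p)}\}$, so that the product of the $p$-localisations contributes exactly the supremum term; this is a routine compatibility of $\indcocat$ with (arbitrary, or at least finite type) products, dualising the standard fact that LS category of a product is the max for nice spaces, together with a limit argument for the infinite product using finite type. Second, I would feed the two towers — one of length $\le \operatorname{sup}_p\{\indcocat X_{(p)}\}$ for the integral-over-all-primes side, one of length $\le \indcocat X_0$ for the rational side $X_{(0)}$ — into the homotopy pullback. The idea is that a cocategory structure on $X$ can be assembled by first killing the obstruction coming from the rational part (using the $\indcocat X_0$ many stages of the rational tower, pulled back along $X\to X_{(0)}$) and then the obstruction coming from the $p$-local part; each pullback of a principal fibration is again a principal fibration, so the stages simply add, giving $\indcocat X\le \operatorname{sup}_p\{\indcocat X_{(p)}\}+\indcocat X_0$.

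The main obstacle I anticipate is making precise the sense in which the cocategory tower of a homotopy pullback is the ``concatenation'' of the towers of the two mapped-in spaces. Inductive cocategory is not literally a pullback-additive invariant on the nose; one has to be careful that the successive principal fibrations for $X$ really can be chosen compatibly with the square, i.e. that the $k$-invariants (classifying maps into Eilenberg--MacLane spaces, or more generally into the fibres occurring in the definition of $\indcocat$) lift appropriately through the arithmetic square. This likely requires a lemma of the form: given a homotopy pullback square and a principal fibration over $C$ whose total space has smaller cocategory obstruction, the induced fibration over $X$ lowers the obstruction for $X$ by one — plus the observation that over $\Pi_p X_{(p)}$ and over $X_{(0)}$ we already have such towers of the stated lengths. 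A secondary technical point is handling the infinite product and the finite type hypothesis so that the relevant $[-,-]$ sets behave well (this is where $X$ nilpotent of finite type is essential, via \cite{DDK}). Once the concatenation lemma is in place, the inequality follows by adding the two tower lengths, exactly as in the statement.
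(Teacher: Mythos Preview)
Your plan has a genuine gap at precisely the point you flag as the ``main obstacle'': the concatenation lemma for cocategory towers under homotopy pullback is not a standard fact, you give no argument for it, and the paper does \emph{not} proceed this way. Inductive cocategory is defined through the cofiber--fiber construction $X\to G_nX\to C_nX$ (Definition~\ref{def2}), not through a Postnikov-style tower of principal fibrations, so the picture of ``pulling back principal fibrations and adding stages'' does not match the object you are trying to bound. Even granting a product formula $\indcocat(\Pi_p X_{(p)})=\sup_p\indcocat X_{(p)}$, a general inequality of the form $\indcocat(\text{pullback})\le \indcocat A+\indcocat B$ is not available, and establishing it would essentially be harder than the theorem itself.

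The paper's route is more concrete and avoids any such general estimate. Set $m=\sup_p\indcocat X_{(p)}$ and $k=\indcocat X_{(0)}$. By Lemma~\ref{lemilla}, to get $\indcocat X\le m+k$ it suffices to produce, for each prime $p$, a retraction of $q_{m+k}\colon X_{(p)}\to G_{m+k}(X_{(p)})$ whose rationalisation is \emph{independent of $p$}. One starts from arbitrary retractions $\tau_p\colon G_m(X_{(p)})\to X_{(p)}$ and sets $r_p=\tau_p\,\iota_m^{m+k}$. The whole content of the proof is then Theorem~\ref{th:sections}, proved via the monodromy action of a fibration (Proposition~\ref{prop:Ganea}): applied to $X_{(0)}$, it yields the rational identity $\iota_m^{m+k}=q_m\,r_k\,\iota_k^{m+k}$ for any fixed rational retraction $r_k$. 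Hence $(r_p)_{(0)}=(\tau_p)_{(0)}\,q_m\,r_k\,\iota_k^{m+k}=r_k\,\iota_k^{m+k}$, which does not depend on $p$, and Lemma~\ref{lemilla} finishes. So the missing idea in your proposal is exactly this comparison-of-retractions result (Theorem~\ref{th:sections}): going up $k$ extra levels in the Ganea tower forces all rationalised retractions to agree. Your pullback-additivity heuristic does not supply this, and without it the argument does not close.
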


The proof of this theorem is carried out in Section \ref{Sec3}. Following the same lines and using  \cite[Theorem 4.2]{Ga3} it is immediate to prove  (details are omitted) the following refinement of (\ref{2m}).

\begin{theorem}\label{th-2} Let $X$ be a finite type $1$-connected $CW$-complex, then
$$\cat X \leq  \operatorname{sup}_p\{\cat X_p\}+\cat X_0.$$
\end{theorem}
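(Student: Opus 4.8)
The plan is to mimic, step by step, the proof of Theorem~\ref{th-1} in Section~\ref{Sec3}, replacing each property of $\indcocat$ used there by the corresponding property of $\cat$. The only non-formal ingredient needed is the Lusternik--Schnirelmann analogue of the cocategory--fibration estimate, namely \cite[Theorem~4.2]{Ga3}, which for a fibration $F\to E\to B$ yields $\cat E\le\cat B+\cat F$.

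First I would note that a finite type $1$-connected $CW$-complex is nilpotent, so by \cite[Theorem~4.1, Example~2.6]{DDK} the space $X$ is the homotopy pullback of the arithmetic square of the Introduction. Taking a model in which the rationalisation $e_{(0)}$ of the local expansion is a fibration, the projection $X\to X_0$ is also a fibration, and its fibre $F$ is the homotopy fibre of $e_{(0)}:\prod_p X_p\to(\prod_p X_p)_0$. Since rationalisation commutes with this product and $(X_p)_0=X_0$, one has $F\simeq\prod_p F_p$, where $F_p$ is the homotopy fibre of the rationalisation $X_p\to X_0$; in particular each $F_p$ is $p$-local and rationally trivial.

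Next I would apply \cite[Theorem~4.2]{Ga3} to the fibration $F\to X\to X_0$, obtaining $\cat X\le\cat X_0+\cat F$, and then estimate $\cat F$ exactly as $\indcocat(\prod_p F_p)$ is estimated in Section~\ref{Sec3}: the product argument gives $\cat F\le\operatorname{sup}_p\{\cat F_p\}$, while $F_p$ arises from $X_p$ through a localisation-type construction that does not raise the category, so $\cat F_p\le\cat X_p$. Combining these, $\cat X\le\operatorname{sup}_p\{\cat X_p\}+\cat X_0$. That this refines (\ref{2m}) is then immediate: factoring the rationalisation $X\to X_0$ through $X_{(p)}$ and using that localisation does not raise the category \cite{Too} gives $\cat X_0=\cat(X_{(p)})_{(0)}\le\cat X_{(p)}=\cat X_p$ for every prime $p$, so $\cat X_0\le\operatorname{sup}_p\{\cat X_p\}$ and the bound above is at most $2\operatorname{sup}_p\{\cat X_p\}$.

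The step I expect to be delicate is $\cat F\le\operatorname{sup}_p\{\cat F_p\}$; this is the one place where the category case is not a verbatim dualisation of the cocategory case. Unlike the inductive cocategory, the Lusternik--Schnirelmann category of an infinite product is \emph{not} controlled by the supremum of the categories of the factors --- the naive product inequality only gives their sum, which is infinite here. What saves the argument is that the $F_p$ have pairwise disjoint ``prime support'' (each is $p$-local and rationally trivial) and are of finite type, so that a section of the $n$-th Ganea fibration of $\prod_p F_p$ can be assembled from sections over the individual factors; \cite[Theorem~4.2]{Ga3} is the fibre-wise device that legitimates this patching. Making this precise, and keeping the additive constant under control so that the final estimate is $\operatorname{sup}_p\{\cat X_p\}+\cat X_0$ rather than one more, is what is meant by ``details are omitted''.
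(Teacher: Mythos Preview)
Your proposal does not track the paper's argument and has genuine gaps.

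First, you have misread what Section~\ref{Sec3} actually does. There is no ``cocategory--fibration estimate'' and no ``product argument for $\indcocat(\prod_p F_p)$'' in the proof of Theorem~\ref{th-1}. The actual mechanism is: Proposition~\ref{prop:Ganea} and Theorem~\ref{th:sections} use the monodromy action to show that, once $\indcocat X\le k$, \emph{any} retraction of $q_k$ becomes compatible with $\iota^n_m$ for $n\ge k+m$; this is applied to $X_{(0)}$ so that the rationalisations of arbitrarily chosen local retractions $\tau_p$ all coincide with a fixed $r_k\iota^n_k$; then Lemma~\ref{lemilla} assembles the $\tau_p\iota^n_m$ into a retraction of $q_n$ for $X$. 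The category proof ``follows the same lines'': \cite[Theorem~4.2]{Ga3} is precisely the Ganea-fibration dual of Theorem~\ref{th:sections} (the paper says so explicitly at the start of Section~\ref{Sec3}), i.e.\ a comparison statement for \emph{sections} of the Ganea fibrations $p_n\colon G_nX\to X$, not a fibration inequality $\cat E\le\cat B+\cat F$. One then uses the evident dual of Lemma~\ref{lemilla} to glue local sections whose rationalisations have been forced to agree.

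Second, even taken on its own terms your alternative route breaks at several points. The additive bound $\cat E\le\cat B+\cat F$ is not \cite[Theorem~4.2]{Ga3} and is not a theorem in general; you cannot invoke it for the fibration $F\to X\to X_0$ without further hypotheses. The inequality $\cat\big(\prod_p F_p\big)\le\sup_p\cat F_p$ is false for LS category, as you yourself note, and the ``patching via disjoint prime support'' you sketch is exactly the content that the paper's section-comparison argument (and its dual in \cite{Ga3}) supplies rigorously --- it is not a side remark one can defer. Finally, $F_p$ is the homotopy fibre of the rationalisation $X_p\to X_0$, not a localisation of $X_p$, so $\cat F_p\le\cat X_p$ does not follow from \cite{Too}; you would need a separate argument there as well. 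The honest dualisation --- compare sections via \cite[Theorem~4.2]{Ga3}, then assemble through the arithmetic square --- avoids all of these issues.
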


We recall here that $X$ is an $H_0$-space if its rationalisation is an $H$-space (thus $\indcocat X_{(0)} = 1$).
Dually, $X$ is a co-$H_0$-space  if  its rationalisation is a co-$H$-space (thus $\cat X_{(0)} = 1$). From Theorems \ref{th-1} and \ref{th-2} we deduce:

\begin{corollary}\label{mm} Let $X$ be finite-type $CW$-complex. Then
\begin{enumerate}[i)]
\item If $X$ is a nilpotent  $H_0$-space. Then
$$
\indcocat  X \leq \operatorname{sup}_p  \{  \indcocat X_{(p)} \} + 1.
$$

\item If $X$ is a 1-connected co-$H_0$-space. Then
$$ \cat  X \leq \operatorname{sup}_p  \{  \cat X_{(p)} \} + 1.$$

\end{enumerate}
\end{corollary}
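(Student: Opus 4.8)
The plan is to read both parts off Theorems \ref{th-1} and \ref{th-2}, the only additional input being the remark recorded just above the statement: a space whose rationalisation is an $H$-space has $\indcocat X_{(0)}\le 1$, and dually a space whose rationalisation is a co-$H$-space has $\cat X_{(0)}\le 1$. Both facts are classical (for the second, $\cat Y\le 1$ exactly characterises co-$H$-spaces), so no new argument is needed for them.

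For part i), I would first observe that a finite type nilpotent $H_0$-space is, in particular, a finite type nilpotent $CW$-complex, so Theorem \ref{th-1} applies and gives $\indcocat X\le\operatorname{sup}_p\{\indcocat X_{(p)}\}+\indcocat X_{(0)}$. Since by hypothesis $X_{(0)}$ is an $H$-space we have $\indcocat X_{(0)}\le 1$, and substituting this into the displayed inequality yields the stated bound. For part ii) the argument is the formal dual: a finite type $1$-connected co-$H_0$-space satisfies the hypotheses of Theorem \ref{th-2}, hence $\cat X\le\operatorname{sup}_p\{\cat X_{(p)}\}+\cat X_{(0)}$, and $\cat X_{(0)}\le 1$ because $X_{(0)}$ is a co-$H$-space; combining the two inequalities gives the claim.

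I do not anticipate a genuine obstacle here, since all the substantive work already sits in Theorems \ref{th-1} and \ref{th-2}. The only two points I would flag with a line of comment are that the degenerate case where $X_{(0)}$ is contractible only strengthens the conclusion, so writing $\le 1$ rather than $=1$ for $\indcocat X_{(0)}$, respectively $\cat X_{(0)}$, costs nothing, and that whenever some $\indcocat X_{(p)}$ (respectively $\cat X_{(p)}$) is infinite the asserted inequality is vacuously true.
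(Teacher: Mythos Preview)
Your proposal is correct and matches the paper's approach exactly: the paper states the corollary as an immediate consequence of Theorems~\ref{th-1} and~\ref{th-2} together with the observation, recorded just before the statement, that an $H_0$-space has $\indcocat X_{(0)}=1$ and a co-$H_0$-space has $\cat X_{(0)}=1$. Your remarks about using $\le 1$ rather than $=1$ and about the vacuous case are harmless refinements.
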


Since finite $1$-connected $H_0$-spaces verify also (\ref{m})  (see \cite{Cos2}), one might expect that
finite $1$-connected co-$H_0$-spaces could satisfy Corollary \ref{mm}.{\it i)}.
The main obstruction to prove such a result is that we do not know the minimal models for Ganea cofibrations (see Remark  \ref{cofiber}).
The upper bound given in Theorem \ref{th-1} is sharp for infinite spaces the same as happened with \eqref{2m}. In \cite{Pan} Pan, inspired in the Roitberg's example above-mentioned, shows an infinite space $X$ such that all its $p$-localisations are $H$-spaces (thus $\indcocat X_{(p)}=1$) although $X$ is not ($\indcocat X=2$).
If we add the assumption on $X$ to be finite, Zabrodsky \cite{Zab1} proved for finite spaces that if $X_{(p)}$ is an $H$-space for every prime $p$, then $X$ is itself an $H$-space. Hence the inequality of Theorem \ref{th-1} is not sharp for $m= \operatorname{sup}_p \indcocat =1.$

A problem somewhat related to the previous one was posed by McGibbon on his survey \cite{McG}.  We recall that finite-type spaces not necessarily homotopic verifying $X_{(p)} \simeq Y_{(p)}$  for every prime $p$ are told to be in the same Mislin Genus.  Hence Theorem \ref{th-1} can be reread in terms of the Mislin Genus:
\begin{corollary}\label{cor1}
Let $X$ and $Y$ be nilpotent finite type spaces in the same Mislin Genus. Then $\mid \indcocat Y- \indcocat X \mid \leq r $ where $r=\indcocat Y_{(0)}=\indcocat X_{(0)}$.
\end{corollary}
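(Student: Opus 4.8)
The plan is to derive Corollary \ref{cor1} formally from Theorem \ref{th-1}, using the symmetry of the hypothesis in $X$ and $Y$ together with the localisation inequality $\indcocat Z_{(p)}\le\indcocat Z$.

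First I would unpack the Mislin genus hypothesis. By definition $X_{(p)}\simeq Y_{(p)}$ for every prime $p$, whence $\indcocat X_{(p)}=\indcocat Y_{(p)}$ for all $p$, and in particular $\sup_p\{\indcocat X_{(p)}\}=\sup_p\{\indcocat Y_{(p)}\}$. Rationalising these equivalences and using $(X_{(p)})_{(0)}\simeq X_{(0)}$ gives $X_{(0)}\simeq Y_{(0)}$, so that $r:=\indcocat X_{(0)}=\indcocat Y_{(0)}$ is well defined, as the statement presupposes. Both $X$ and $Y$ are finite type nilpotent $CW$-complexes, so Theorem \ref{th-1} applies to each of them.

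Next I would invoke the cocategory analogue of Toomer's inequality \cite[Theorem 4]{Too}, namely that $\indcocat Z_{(p)}\le\indcocat Z$ for every finite type nilpotent $CW$-complex $Z$ and every prime $p$; this is the expected dual statement and follows from the compatibility of $p$-localisation with the Ganea fibrations defining $\indcocat$ \cite{Ga1}. Granting it, Theorem \ref{th-1} applied to $X$ gives
\begin{equation*}
\indcocat X\le\sup_p\{\indcocat X_{(p)}\}+\indcocat X_{(0)}=\sup_p\{\indcocat Y_{(p)}\}+r\le\indcocat Y+r,
\end{equation*}
and exchanging the roles of $X$ and $Y$ yields $\indcocat Y\le\indcocat X+r$. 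Together these say exactly $|\indcocat Y-\indcocat X|\le r$.

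The one point that is not purely formal is the localisation inequality $\indcocat Z_{(p)}\le\indcocat Z$; I expect this to be the main thing to pin down, either by locating it in the literature on inductive cocategory or by checking directly that applying $(-)_{(p)}$ to a Ganea-type decomposition of $Z$ of length $n$ produces one for $Z_{(p)}$. The remaining steps — reading off $X_{(p)}\simeq Y_{(p)}$ and $X_{(0)}\simeq Y_{(0)}$ from the genus hypothesis and assembling the two one-sided bounds — are immediate.
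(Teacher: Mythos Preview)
Your argument is correct and is exactly what the paper has in mind: the corollary is stated immediately after Theorem~\ref{th-1} with the remark that it is Theorem~\ref{th-1} ``reread in terms of the Mislin Genus,'' and no separate proof is given. The only point you flag as needing to be tracked down, the localisation inequality $\indcocat Z_{(p)}\le\indcocat Z$ for nilpotent finite type $Z$, is already recorded in the paper as Theorem~\ref{toomer}, so no external reference is required.
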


We recall that the inductive cocategory  of a product is the supremum of the inductive cocategory of their factors \cite[Theorem 2.7]{Ga1}. Using this result, we can prove:
\begin{theorem}\label{Th-2}
Let $X$ be a finite-type $1$-connected $H_0$-space (with a finite number of homotopy groups or a finite number of homology groups). If $Y$ is in the Mislin genus of $X$,  then $\indcocat X=\indcocat Y$.
\end{theorem}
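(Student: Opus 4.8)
The plan is to reduce the claim to the product formula $\indcocat(A\times B)=\max\{\indcocat A,\indcocat B\}$ of \cite[Theorem 2.7]{Ga1} by exhibiting an integer $n\ge 1$ with $X^{n}\simeq Y^{n}$, where $X^{n}$ denotes the $n$-fold Cartesian power. Granting such a \emph{power equivalence}, the theorem is immediate: applying \cite[Theorem 2.7]{Ga1} repeatedly gives $\indcocat X=\indcocat(X^{n})=\indcocat(Y^{n})=\indcocat Y$, the middle equality being the power equivalence. (Note that Corollary \ref{cor1} already yields $\mid\indcocat X-\indcocat Y\mid\le 1$, since $\indcocat X_{(0)}=1$, but we are after exact equality.)

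First I would record that $Y$ inherits every hypothesis on $X$. Lying in the Mislin genus of $X$ means $Y_{(p)}\simeq X_{(p)}$ for all primes $p$ and $Y_{(0)}\simeq X_{(0)}$; hence $Y_{(0)}$ is an $H$-space, so $Y$ is again a finite-type $1$-connected $H_0$-space, and since the homotopy and the homology groups of $Y$ coincide with those of $X$ after localisation at every prime, $Y$ too has only finitely many non-trivial homotopy groups (resp.\ homology groups). Thus $X$ and $Y$ play symmetric roles, which will matter at the end.

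The core of the argument is the structure theory of the Mislin genus. Under either finiteness assumption the genus $\mathcal{G}(X)$ is a \emph{finite} set: if $X$ has finitely many homology groups it is homotopy equivalent to a finite complex (the Wall obstruction vanishing in the simply connected case) and Wilkerson's finiteness theorem for the genus applies; the case of finitely many homotopy groups is equally classical, via the standard description of $\mathcal{G}(X)$ as a double coset of groups of self-equivalences of the rationalisation modulo the images of the integral and the $p$-local self-equivalences. Because $X$ is an $H_0$-space, $\mathcal{G}(X)$ moreover carries Mislin's group structure $\#$ (see \cite{McG} and the references therein), whose characteristic property is $Y_{1}\times Y_{2}\simeq X\times(Y_{1}\#Y_{2})$ for $[Y_{1}],[Y_{2}]\in\mathcal{G}(X)$. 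Let $k$ be the order of $[Y]$ in this finite group. Iterating the characteristic property — $Y\times Y\simeq X\times Y_{2}$ with $[Y_{2}]=[Y]^{\#2}$, then $Y^{3}\simeq X\times(Y\times Y_{2})\simeq X^{2}\times Y_{3}$ with $[Y_{3}]=[Y]^{\#3}$, and so on — yields $Y^{k}\simeq X^{k-1}\times Y_{k}$ with $[Y_{k}]=[Y]^{\#k}=[X]$, i.e.\ $Y_{k}\simeq X$, so $Y^{k}\simeq X^{k}$. This is the required power equivalence with $n=k$, and the theorem follows as in the first paragraph.

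The step I expect to be the main obstacle is this last one: securing the product behaviour of the genus group in the stated generality. One must check that the group law on $\mathcal{G}(X)$ is indeed realised by $Y_{1}\times Y_{2}\simeq X\times(Y_{1}\#Y_{2})$ and not merely by the weaker $X\times Y_{1}\times Y_{2}\simeq X^{2}\times(Y_{1}\#Y_{2})$, and that it is available when $X$ has finitely many homotopy \emph{or} homology groups, not only for finite complexes. Should only the weaker identity be at hand, the conclusion is still salvaged: telescoping it gives $X\times Y^{k}\simeq X^{k+1}$, whence $\max\{\indcocat X,\indcocat Y\}=\indcocat(X\times Y^{k})=\indcocat(X^{k+1})=\indcocat X$, so $\indcocat Y\le\indcocat X$; by the symmetry noted in the second paragraph the reverse inequality follows identically, and equality holds. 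Verifying the finiteness of $\mathcal{G}(X)$ in the ``finitely many homotopy groups'' case should also be written out carefully, as it is the place where the hypothesis on $X$ enters in an essential way.
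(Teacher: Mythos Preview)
Your argument is correct and complete (the fallback you give handles any residual worry about the precise form of the product law), but it is \emph{not} the paper's proof.

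The paper does not pass through the genus group structure or power equivalences $X^{k}\simeq Y^{k}$. Instead it invokes Zabrodsky's non-cancellation theorem \cite[Theorem 3.5]{Zab1} (see also \cite{Zab2}): under the stated hypotheses there is a product of odd-dimensional spheres $\mathbb{S}$ with $X\times\mathbb{S}\simeq Y\times\mathbb{S}$. The product formula then gives $\max\{\indcocat X,\indcocat\mathbb{S}\}=\max\{\indcocat Y,\indcocat\mathbb{S}\}$. Since $\indcocat$ of an odd sphere is $1$ (for $S^{1},S^{3},S^{7}$) or $2$ (otherwise), this settles the case $\max\ge 3$ immediately; when the common maximum is $\le 2$ the paper finishes with the separate fact \cite[Corollary 2.9]{Zab1} that within a genus $\indcocat X=1\Leftrightarrow\indcocat Y=1$.

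Comparison: both routes rest on Ganea's product formula plus a stabilisation result from genus theory, but the stabilisers differ. The paper's sphere stabilisation is a single clean citation, at the price of a small case split for low cocategory and an extra appeal to \cite[Corollary 2.9]{Zab1}. Your power-equivalence route is more uniform---no case analysis and no auxiliary $H$-space result---but you pay for this by needing the finiteness of $\mathcal{G}(X)$ and its abelian group structure with the product realisation, which (as you rightly flag) has to be checked carefully in the ``finitely many homotopy groups'' branch. Either approach proves the theorem; yours is arguably tidier once the genus-group machinery is in place, while the paper's is closer to a black-box application of Zabrodsky's work.
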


\begin{proof} According to Zabrodsky's non cancellation result \cite[Theorem 3.5]{Zab1}  (see also \cite{Zab2})  there exists  a product of (odd dimensional) spheres $\mathbb S$,  such that $X \times  \mathbb S  = Y \times \mathbb S$.
Then $$\indcocat (X \times  \mathbb S) = \operatorname{max}\{ \indcocat X, \ \indcocat \mathbb S\}.$$
Since the inductive cocategory of an odd dimensional sphere is either $1$ (for $S^1$, $S^3$, $S^7$) or $2$ (otherwise) \cite[p.\ 28]{Ga4}, if this maximum is greater or equal to $3$, then  $\indcocat X$ and $\indcocat Y$ coincide. If the maximum is less than or equal to $2$, we use \cite[Corollary 2.9]{Zab1} asserting  $\indcocat X = 1$ if and only if $\indcocat Y =1$ to conclude.
\end{proof}

\noindent\textbf{Notation.} In what follows, all spaces we consider have the homotopy type of a $CW$-complex. By abuse of notation, we identify the maps between spaces and its homotopy classes. Finally, we understand $p$-localisation as in \cite{HMR}.


\section{Inductive cocategory}\label{Sec2}

The cocategory is  presented by Ganea  \cite{Ga1}, \cite{Ga1a}, \cite{Ga1b}, \cite{Ga2} as the dual in an Eckmann-Hilton sense of the Lusternik Schnirelmann category. In some sense, it is a more natural algebraic invariant. The same way the nilpotency of a group $G$ measures how far is $G$ from being abelian, the cocategory of $X$ measures how far $X$ is from being an $H$-space. For example, the cocategory of the classifying space of a discrete group $G$ coincides with the nilpotency of $G$ \cite[Theorem 2.7]{Ga1b}, and the cocategory of a space is an upper bound of the  nilpotency of the space  \cite[Definition 2.2, Theorem 2.4]{Ga1b}.

Several notions of cocategory, other than Ganea's,  exist in literature  (see e.g.\ \cite{A}, \cite{Hop}, \cite{Hov}) though they are not known to be equivalent. In particular, the second author of this paper  together with A.\ Murillo \cite{MV}  have introduced the dual to the Whitehead approach of the LS category which has the advantage that is far more computable than the previous mentioned.

To our purpose of $p$-localising we need the functoriality of the cofiber-fiber construction of the inductive cocategory of Ganea \cite[Definition 6.1, Remark 6.2]{Ga2}:

\begin{definition}\label{def2} The
$n$-th Ganea cofibration of
$X$,
$X \overset{q_n}\rTo G_nX\rTo C_nX $, is defined inductively as follows:
$q_0$ is the cofibration $X\overset{q_0}\rTo CX\rTo\Sigma X$.
Next consider $F$ the
fibre of
$G_{n-1}X\rTo C_{n-1}X$ and factor
$q_{n-1}$ through $F$ to get a  map $X\rTo F$. The associated cofibration to
that map is by definition the $n$-th Ganea
cofibration of $X$. This can
be better viewed in the following diagram:
\begin{diagram}[size=1cm,labelstyle=\scriptstyle]
&&&X&&&\\
&&\ldTo(3,2)^{{q_n}}\ldTo(1,2)^{{q_{_{n-1}}}}&&\rdTo(1,2)_{{{q_{_{1}}}}}\rdTo(3,
2)_{{q_0}}&&\\
G_nX&\rTo^{\iota_n}&G_{n-1}X&\rDots&G_1X&\rTo^{\iota_1}&CX\\
\dTo&&\dTo&&\dTo&&\dTo\\
C_nX&&C_{n-1}X&&C_1X&&\Sigma X\\
\end{diagram}
Then, $\indcocat X$ is the least integer $n$ for which $q_n$ has a homotopy
retraction.
\end{definition}
\begin{remark}\label{cofiber}The reason for the cocategory  not being as popular among specialists as the LS category relies in the fact that the homotopy type of the cofibers $C_n(X)$ in the construction above are not known, whether in the dual fiber-cofiber construction, we do know that $F_n (X) = \overset{n+1}{\ast} \Omega X$  (see \cite[Remark 3.5]{Ga2}).
\end{remark}

In \cite[Theorem 7]{Too} Toomer explains how the $p$-localisation of $1$-connected spaces of finite type behaves nicely with respect the cofiber-fiber construction.  Considering the generalization of Whitehead Theorem in \cite{Dror}, the result can be extended to nilpotent spaces:

\begin{theorem}\label{toomer} Let $X$ be a nilpotent space of finite type. Then $(G_n X)_{(p)}\simeq G_n(X_{(p)})$, and as a consequence if  $\indcocat X \leq n$, then $\indcocat X_{(p)} \leq n$.
\end{theorem}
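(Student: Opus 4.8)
The plan is to establish the two assertions in sequence: first the localisation formula $(G_nX)_{(p)}\simeq G_n(X_{(p)})$ for the \emph{cofiber-fiber} construction of Definition \ref{def2}, and then deduce the cocategory bound as a formal consequence. The starting point is that $p$-localisation commutes with the very first step of the construction, since $(\Sigma X)_{(p)}\simeq \Sigma(X_{(p)})$ and $(CX)_{(p)}\simeq C(X_{(p)})$, so that $(G_1X)_{(p)}=(CX)_{(p)}\simeq C(X_{(p)})=G_1(X_{(p)})$, compatibly with $q_0$. The induction step is where the real work lies: assuming $(G_{n-1}X)_{(p)}\simeq G_{n-1}(X_{(p)})$ compatibly with the cofibration $G_{n-1}X\rTo C_{n-1}X$, I must show that the fibre $F$ of $G_{n-1}X\rTo C_{n-1}X$ localises correctly, i.e.\ $F_{(p)}\simeq F(X_{(p)})$, and then that the mapping cone of $X\rTo F$ localises to the mapping cone of $X_{(p)}\rTo F(X_{(p)})$.

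The cone step is again easy, because localisation commutes with mapping cones (homotopy pushouts) for nilpotent spaces of finite type; the delicate point is the fibre step. For a fibration $E\rTo B$ with fibre $F$, it is \emph{not} true in general that $F_{(p)}$ is the fibre of $E_{(p)}\rTo B_{(p)}$ — one needs a nilpotency/finiteness hypothesis, which is exactly why the statement is restricted to nilpotent finite-type $X$. So the key steps are: (i) show inductively that each $G_{n-1}X$ and $C_{n-1}X$ is nilpotent of finite type, so that the generalised Whitehead theorem of \cite{Dror} and the fibre-localisation theorem apply; (ii) invoke that fibre-localisation result to conclude $F_{(p)}\simeq F(X_{(p)})$; (iii) use that the factorisation $X\rTo F$ of $q_{n-1}$ localises to the corresponding factorisation of $q_{n-1}$ for $X_{(p)}$ (this follows from the uniqueness of the lift up to homotopy and naturality of $p$-localisation); (iv) take mapping cones to get $(G_nX)_{(p)}\simeq G_n(X_{(p)})$ compatibly with $q_n$, closing the induction. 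The finiteness of homology of $G_nX$ in each degree can be tracked degreewise through the cone and fibre constructions — this is Toomer's original argument in \cite[Theorem 7]{Too} for the $1$-connected case, and the only new ingredient is replacing the classical Whitehead/Zeeman comparison arguments with the nilpotent versions from \cite{Dror}.

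For the consequence, suppose $\indcocat X\le n$, so $q_n\colon X\rTo G_nX$ has a homotopy retraction $r\colon G_nX\rTo X$ with $r q_n\simeq \mathrm{id}_X$. Applying $p$-localisation, which is a functor, $r_{(p)}\colon (G_nX)_{(p)}\rTo X_{(p)}$ satisfies $r_{(p)}(q_n)_{(p)}\simeq \mathrm{id}_{X_{(p)}}$; transporting along the equivalence $(G_nX)_{(p)}\simeq G_n(X_{(p)})$ established above (which identifies $(q_n)_{(p)}$ with the $n$-th Ganea cofibration map of $X_{(p)}$), we obtain a homotopy retraction of the $n$-th Ganea cofibration of $X_{(p)}$, hence $\indcocat X_{(p)}\le n$.

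The main obstacle, as indicated, is step (ii): controlling the behaviour of $p$-localisation on the fibre of $G_{n-1}X\rTo C_{n-1}X$. This requires knowing that the base $C_{n-1}X$ is nilpotent with finitely generated homology in each degree — a fact that must itself be carried through the induction, since $C_{n-1}X$ is built by iterated cofibre-then-fibre operations starting from $\Sigma X$, and one has to check that neither operation destroys nilpotence or finite type. Once that bookkeeping is in place, the generalisation of Whitehead's theorem in \cite{Dror} does the rest, exactly as in Toomer's $1$-connected argument.
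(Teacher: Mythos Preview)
Your proposal is correct and follows essentially the same route as the paper: both invoke Toomer's original inductive argument \cite[Theorem~7]{Too} for the cofiber--fiber construction and upgrade the $1$-connected hypothesis to nilpotent finite type by replacing the classical Whitehead theorem with Dror's generalisation \cite{Dror}; the consequence is then obtained by $p$-localising a retraction of $q_n$ and transporting it along the equivalence $(G_nX)_{(p)}\simeq G_n(X_{(p)})$. One small slip: in your base case you write $G_1X=CX$, but in the indexing of Definition~\ref{def2} it is $G_0X=CX$ (with $C_0X=\Sigma X$); this does not affect the argument.
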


The idea of the proof is to consider the following diagram
$$ \xymatrix{&X_{(p)} \ar[rd]^{q_n(X_{(p)} )} \ar[ld]_{{(q_n)_{(p)}}}& \\
(G_{n}X)_{(p)}  \ar[rr] ^{l_p}_{\simeq}&&  G_n(X_{(p)})  }$$
where ${{(q_n)_{(p)}}}$ is the $p$-localisation of $q_n$, the $n$-th Ganea cofibration, $q_n(X_{(p)} )$ is the Ganea cofibration for $X_{(p)}$, and $l_{p}$ exists by the universal property of $p$-localisation. In order to show that $l_{p}$ is an homotopy equivalence, we have to use \cite[Theorem 3.1, Example 4.3]{Dror} where conditions are given for an homological equivalence to be an homotopical equivalence. Then, it suffices to consider a retraction of $q_n$ and $p$-localise it to conclude. Conversely:

\begin{lemma}\label{lemilla}
Let $X$ be a nilpotent finite type space. Then the $n$-th Ganea cofibration of $X$ admits a homotopy retraction if and only if for every prime $p$ there exists $r_p$ a homotopy retraction of the  $n$-th Ganea cofibration of $X_{(p)}$ such that $(r_p)_0=r_0.$
\end{lemma}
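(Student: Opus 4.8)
The forward direction is trivial: if $q_n\colon X\to G_nX$ admits a homotopy retraction $r\colon G_nX\to X$, then by Theorem~\ref{toomer} the $p$-localisation $(G_nX)_{(p)}\simeq G_n(X_{(p)})$, and $(r)_{(p)}$ is a homotopy retraction of $q_n(X_{(p)})$. Setting $r_p=(r)_{(p)}$ gives $(r_p)_0=(r_{(p)})_0=r_0$ for every $p$, where $r_0$ is the rationalisation of $r$. The content of the lemma is the converse, and the plan is to assemble a global retraction from the local data by means of the arithmetic (Hasse) square for $X$, exploiting the fact — already recorded in Theorem~\ref{toomer} — that $G_n(-)$ commutes with localisation, so that $G_nX$ is \emph{itself} reconstructed as the homotopy pullback of $G_n X_{(0)}\to (\Pi_p G_n X_{(p)})_{(0)}\leftarrow \Pi_p G_n X_{(p)}$.

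First I would set up the two compatible fracture squares: one for $X$ and one for $G_nX$, related vertically by the maps $q_n$, $(q_n)_{(0)}$, $\Pi_p\, q_n(X_{(p)})$, etc. Since $X$ is nilpotent of finite type, \cite[Theorem 4.1, Example 2.6]{DDK} applies to both $X$ and $G_nX$ (the latter is nilpotent of finite type because it sits in a fibration over $C_nX$ with fibre $\overset{n+1}{*}\Omega X$, or alternatively because localisation commutes with the Ganea construction). Next, from the hypothesis I have retractions $r_p\colon G_n X_{(p)}\to X_{(p)}$ for each prime and a common rationalisation: each $(r_p)_0$ equals a single map $r_0\colon G_n X_{(0)}\to X_{(0)}$, which is automatically a homotopy retraction of $(q_n)_{(0)}$. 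The product $\Pi_p r_p\colon \Pi_p G_n X_{(p)}\to \Pi_p X_{(p)}$ and the map $r_0$ then agree after rationalisation, essentially by construction of $r_0$ as the common value of the $(r_p)_0$; one must check they agree on the nose as maps into $(\Pi_p X_{(p)})_{(0)}$, i.e.\ that the square
\begin{diagram}[size=1.3cm,labelstyle=\scriptstyle]
\Pi_p G_n X_{(p)} & \rTo & (\Pi_p G_n X_{(p)})_{(0)} & \lTo & G_n X_{(0)}\\
\dTo^{\Pi_p r_p} & & \dTo & & \dTo_{r_0}\\
\Pi_p X_{(p)} & \rTo & (\Pi_p X_{(p)})_{(0)} & \lTo & X_{(0)}
\end{diagram}
commutes up to homotopy. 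Granting this, the universal property of the homotopy pullback defining $X$ produces a map $r\colon G_nX\to X$ together with homotopies making $r$ compatible with all three of $\Pi_p r_p$, $r_0$. Finally I would verify $r\circ q_n\simeq \mathrm{id}_X$: by the same fracture-square uniqueness (two maps $G_nX\to X$ — here $r\circ q_n$ and $\mathrm{id}$ — that agree locally and rationally agree globally), it suffices that $(\Pi_p r_p)\circ(\Pi_p q_n(X_{(p)}))\simeq \mathrm{id}$, which holds because each $r_p$ is a retraction, and that $r_0\circ (q_n)_{(0)}\simeq\mathrm{id}$, which holds because $r_0=(r_p)_0$ and $r_p q_n(X_{(p)})\simeq\mathrm{id}$ localises rationally.

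The main obstacle is the phase-compatibility bookkeeping in the middle step: the homotopy pullback is only \emph{homotopy} associative/unique, so assembling $r$ from $\{r_p\}$ and $r_0$ requires not merely that the rationalisations agree but that a specific homotopy making the square above commute can be chosen; and then one must track that \emph{same} homotopy through the construction of $r$ and through the verification $r q_n\simeq \mathrm{id}$. This is where the hypothesis is carefully stated as ``$(r_p)_0=r_0$'' rather than ``$(r_p)_0\simeq r_0$'': it gives an \emph{equality} of the rational phases, removing one layer of choice. The remaining subtlety — that $\Pi_p q_n(X_{(p)})$ rationalises to the map appearing in the Hasse square for $G_nX$ — is exactly the naturality of the square in Theorem~\ref{toomer}, so it is available to us. A secondary technical point is finiteness/nilpotence of $G_nX$ needed to invoke \cite{DDK}; this follows from Remark~\ref{cofiber} applied to the fibre-cofibre dual, or directly from $(G_nX)_{(p)}\simeq G_n(X_{(p)})$ together with the corresponding rational statement.
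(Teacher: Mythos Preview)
Your overall strategy matches the paper's: assemble a candidate retraction $r\colon G_nX \to X$ from the local data $\{r_p\}$ and $r_0$ via the arithmetic square, then argue that $r$ is a retraction. The gap is in your final step. You claim that $r\circ q_n \simeq \mathrm{id}_X$ follows from ``fracture-square uniqueness'' --- that two self-maps of $X$ which agree after every $p$-localisation and rationally must be homotopic. This is not a valid principle: for a nilpotent finite-type $X$ realised as a homotopy pullback, the function $[X,X]\to \prod_p [X_{(p)},X_{(p)}]$ need not be injective, and the discrepancy between $[Y,\holim(\cdots)]$ and the set-theoretic pullback of mapping sets is governed by loop-space terms that you have not shown to vanish. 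Your paragraph on ``phase-compatibility bookkeeping'' correctly flags this difficulty at the stage of \emph{constructing} $r$, but the same obstruction reappears at the stage of \emph{verifying} $r\circ q_n\simeq\mathrm{id}$, and there you do not address it.

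The paper sidesteps the issue entirely: from $(r\circ q_n)_{(p)}\simeq \mathrm{id}_{X_{(p)}}$ for every prime $p$ one deduces only that $r\circ q_n$ induces isomorphisms on all homotopy groups, hence (since $X$ is nilpotent of finite type) is a self-homotopy equivalence $e$ of $X$. Then $e^{-1}\circ r$ is the desired retraction of $q_n$. Your argument is salvaged by exactly this one-line modification.
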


\begin{proof} Consider  arithmetic squares as we mentioned at the beginning of this paper:
\begin{equation*}
\resizebox{1\textwidth}{!}{
\xymatrix{
X_{(0)} \ar[rr]^-{({{\{e_{(p)}\}}_p})_{(0)}}  \ar[d]_{{(q_n)}_{(0)}}&&( \Pi_p X_{(p)})_{(0)} \ar[d]^{{\big(\Pi_p {(q_n)}_{(p)}\big)}_{(0)}}& \Pi_p X_{(p)} \ar[d]^{{\Pi_p {(q_n)}_{(p)}}} \ar[l]_-{e_{(0)}}
&&& X \ar[d]_{q_n} \\
(G_nX)_{(0)}  \ar[d]_{r_0}  \ar[rr]^-{({{\{e_{(p)}\}}_p})_{(0)}}  &&( \Pi_p (G_nX)_{(p)} )_{(0)}  \ar[d]^{\big(\Pi_p r_p \big)_{(0)}} & \Pi_p (G_nX)_{(p)}
 \ar[d]^{\Pi_p r_p }   \ar[l]_-{e_{(0)}}
  & \ar@{=>}[r]^{\text{homotopy}}_{\text{pullback}}&&  G_nX  \ar@{.>}[d]\\
X_{(0)} \ar[rr]^-{({{\{e_{(p)}\}}_p})_{(0)}}  &&( \Pi_p X_{(p)})_{(0)} & \Pi_p X_{(p)} \ar[l]_-{e_{(0)}}   &&& X  }}
\end{equation*}
By a standard argument, the induced (dotted) map is a retraction of $q_n$ up to a self-homotopy equivalence of $X$. In fact, if we take their composition and we $p$-localise, we obtain $Id_{X_{(p)}}$ for every prime $p$. Since $X$ is nilpotent of finite type, we have that this composition is a self-homotopy equivalence of $X$. Composing with the inverse of this equivalence provides the desired retraction.
\end{proof}


\section{Homotopy retractions of the Ganea cofibration}\label{Sec3}
The aim of this section is to provide a way of comparing two different retractions of the Ganea cofibration without using the homotopy type of the cofiber $C_n(X)$ (see Remark \ref{cofiber}), as Ganea did in  \cite[Section 4]{Ga3}. In fact, using this result of Ganea, it is immediate to prove the refinement
of (1) in Theorem \ref{th-2}.

We use the monodromy action associated to a fibration to compare retractions. We have included at the end of the paper an appendix that describes results on actions that we need in next the proposition.

\begin{proposition}\label{prop:Ganea}
Let $A\rTo^{a} X \rTo^{j} C$  be a cofibration, $F_j\rTo^{\iota} X $ be the inclusion of the homotopy fiber of $ X \rTo^{j} C$, and $F \rTo^{i} E \rTo^p B$ be a fibration.
Then,
for  any $X\rTo^g F$ and for any retraction $A\rTo^{a} X\rTo^r A$, the following holds:
$$i  g= i  g  a  r \ \Leftrightarrow \ g  \iota=  g  a   r  \iota$$
\end{proposition}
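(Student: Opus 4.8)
The statement to prove is a biconditional relating the equation $ig = igar$ (in $[X,E]$) to $g\iota = gar\iota$ (in $[F_j, F]$), where $r$ is a homotopy retraction of the cofibration $a\colon A\to X$.

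\medskip

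\noindent\textbf{Proof proposal.} The plan is to use the cofibration $A \overset{a}\rTo X \overset{j}\rTo C$ together with the fibration $F \overset{i}\rTo E \overset{p}\rTo B$, and the fact that a self-map of $X$ of the form $a r$ (where $r a \simeq \operatorname{id}_A$, or at least $r$ is a retraction, so $a r$ is idempotent up to homotopy) factors the identity through $A$. The forward direction ($\Rightarrow$) is the easy one: restricting the equation $i g = i g a r$ along the homotopy-fiber inclusion $\iota\colon F_j \rTo X$ gives $i g \iota = i g a r \iota$ in $[F_j, E]$; now the composite $a r \iota\colon F_j \rTo X$ can be understood via the cofibration, since $j \circ a$ is nullhomotopic (being the composite of two consecutive maps in a cofibration), hence $j a r$ is nullhomotopic, hence $a r$ (composed with $\iota$, or directly) lands appropriately — the point is that $i g \iota = i g a r \iota$ forces $g\iota = g a r \iota$ because $i$ is (up to homotopy) injective on the relevant homotopy classes. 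Actually $i$ need not be injective on all of $[F_j, F] \to [F_j, E]$; the kernel of $i_*$ is acted on by the monodromy / is the image of $\partial\colon [F_j, \Omega B] \to [F_j, F]$. So the correct statement is that $g\iota$ and $gar\iota$ differ by an element coming from $\Omega B$, and one has to show this difference is trivial — this is where the monodromy action from the appendix enters.

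\medskip

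For the reverse direction ($\Leftarrow$), suppose $g\iota = gar\iota$ in $[F_j, F]$. I want to promote this to $ig = igar$ in $[X, E]$. The key geometric input is the cofiber sequence: a map $X \rTo E$ is detected on $F_j$ up to something measured by $C$ (or $\Sigma A$). More precisely, $\iota\colon F_j \rTo X$ sits in a fibration $F_j \rTo X \rTo C$, and the obstruction to two maps $X \rTo E$ agreeing, given that they agree after restriction to $F_j$, is controlled by $[C, \text{something}]$ via the monodromy action of $\pi_1$ or of $\Omega C$ acting on the fiber $F_j$. Here is the clean way: consider the maps $ig, igar\colon X \rTo E$. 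We know $p \circ ig = *$ and $p \circ igar = *$ since $i$ factors through the fiber $F$, so both maps lift to $F$ in the sense that they factor through $F \rTo E$; write $ig = i g$ and $igar = i(gar)$ literally, with $g, gar\colon X \rTo F$. Two maps $X \rTo F$ whose composites to $E$ agree on $F_j$: we want them to agree on all of $X$ after composing with $i$. The comparison is governed by the action of $[\Sigma F_j, B]$ or by the monodromy of the fibration $F_j \to X \to C$ on $[F_j, F]$, and the hypothesis $g\iota = gar\iota$ says precisely that the two sections of the appropriate pullback fibration agree over $F_j$; pushing forward via $i$ into $E$ kills the monodromy obstruction because $E \rTo B$ is a fibration and the relevant class dies in $B$.

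\medskip

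\noindent\textbf{Main obstacle.} The hard part is the $(\Leftarrow)$ direction, and specifically handling the monodromy action cleanly without knowing the homotopy type of $C$. The naive hope — that $i_*\colon [F_j, F] \rTo [F_j, E]$ followed by the coaction of $C$ gives an exact comparison sequence $[X,F] \rTo [F_j, F]$ whose "difference" lands in an image of $[C,\cdot]$ — requires care because $F_j \rTo X \rTo C$ is a fibration (not a cofibration), so the relevant tool is the action of the loop space $\Omega C$ (or of $\pi_1(C)$) on the fiber $F_j$, and on homotopy classes $[F_j, F]$ this becomes a monodromy action. The statement $ig = igar$ versus $g\iota = gar\iota$ is exactly the assertion that this monodromy obstruction, which a priori lives in a quotient involving $[F_j,\Omega C \times F]$ or similar, vanishes after post-composing with $i\colon F \rTo E$ — and it vanishes because in $E$, being the total space of a fibration over $B$, the relevant twisting is unwound (the class lifts to $B$ and $B$ absorbs it). I expect the appendix's lemmas on monodromy actions — presumably saying that for a fibration $F\to E\to B$ and a map into the fiber, two maps agreeing "up to the action" become equal after including into $E$ when the action factors through $\Omega B$ — to be invoked here essentially verbatim. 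Assembling the zig-zag (restrict along $\iota$; use that $jar \simeq *$ to control $ar$; invoke the monodromy lemma to go back up) in the right order is the crux; the individual steps are formal once that lemma is in hand.
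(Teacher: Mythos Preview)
Your orientation is reversed: the paper proves only the $\Rightarrow$ direction (and this is the only direction invoked in Theorem~\ref{th:sections}), and this is precisely the direction that requires the monodromy argument. Your sketch of $\Rightarrow$ has a real gap. You restrict $ig=igar$ along $\iota$ first, obtain $ig\iota=igar\iota$, and then appeal to the monodromy theorem on $F_j$ to write $g\iota=\epsilon'\vdash gar\iota$ for some $\epsilon'\in[F_j,\Omega B]$. At that point you have no leverage to show $\epsilon'$ acts trivially: the cofibration $A\to X\to C$ gives you nothing about $[F_j,\Omega B]$, and the observation that $jar\simeq *$ does not by itself force the action to vanish.

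The paper's proof applies the monodromy theorem \emph{before} restricting, on $[X,F]$: from $ig=igar$ one gets $\epsilon\in[X,\Omega B]$ with $g=\epsilon\vdash gar$. Now the retraction enters twice. First, precomposing with $a$ and using $ra=1_A$ gives $ga=\epsilon a\vdash ga$, so $\epsilon a$ stabilises $ga$. Second, $(\epsilon-\epsilon ar)a=0$, so by the cofibre exact sequence for $[\,\cdot\,,\Omega B]$ there is $\psi\in[C,\Omega B]$ with $\epsilon=\psi j+\epsilon ar$. Only now does one restrict along $\iota$: since $j\iota=0$ the $\psi j$ term dies, and naturality plus the stabiliser fact give
\[
g\iota=(\epsilon ar\iota)\vdash gar\iota=\big((\epsilon a)\vdash ga\big)\,r\iota=ga\,r\iota=gar\iota.
\]
The order matters: lifting the monodromy parameter to $[X,\Omega B]$ is what lets the cofibration sequence and the retraction identity $ra=1_A$ decompose it; once you have passed to $[F_j,\Omega B]$ that structure is gone. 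Your lengthy discussion of $\Leftarrow$ is therefore misplaced effort, and your treatment of $\Rightarrow$ is missing exactly this decomposition step.
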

\begin{proof}
For any space $Y$, let $+$ denote the group operation in $[Y,\Omega B]$  and let $\vdash$ denote the $[Y,\Omega B]$-action on $[Y,F]$ (see Apendix \ref{appendix}).
Since $ i  g= i  g  a  r$,  by Theorem \ref{thoperation} there exists an element $\epsilon\in [X,\Omega B]$ such that $g=( \epsilon\vdash g  a  r)$. Moreover, $r  a= 1_A$, and therefore $$g  a=( \epsilon\vdash g  a  r) a = (\epsilon  a\vdash g  a  r a) =  (\epsilon a\vdash g  a) .$$
Using again that $ra= 1_A$, we deduce that $(\epsilon - \epsilon ar)a=0$, hence there exits an element $\psi\in [C,\Omega B]$ such that $\psi j=\epsilon - \epsilon ar$, that is, $\epsilon=\psi j + \epsilon ar$.

Therefore $g=\big((\psi j+ \epsilon ar)\vdash gar \big)$ and (notice that $j\iota =0$)
\begin{align*}
g \iota & =\big((\psi j + \epsilon ar)\vdash gar\big)\iota\\
& =\big( (\psi j + \epsilon ar) \iota \vdash gar\iota \big)\\
& =\big( (\psi j \iota  + \epsilon ar \iota)\vdash gar\iota \big)\\
& = \big(\epsilon ar\iota \vdash gar\iota \big) \\
& =(\epsilon a \vdash ga)r\iota \\
& = gar\iota
\end{align*}
\end{proof}

We can now prove the main result in this section. Let $\iota ^n_m$ denote the composition $\iota_{m+1}\iota_{m+2}\cdots\iota_n\colon G_nX\rightarrow G_mX$ where $\iota_k$ is as in Definition \ref{def2}. Then,

\begin{theorem}\label{th:sections}
Let $X$ be a space such that $ \indcocat X \leq k$. Then, for any $m \geq 0$ and any $n \geq k+m$, the following diagram is homotopy commutative
\begin{equation}\label{diamante}
         \xymatrix{
&G_{n}X\ar[dl]_{\iota_{k}^{n}} \ar[dr]^{\iota_{m}^{n}} &\\
G_kX   \ar[dr]_{r_{k}} &  &  G_{m}X\\
& X  \ar[ur]_{q_{m}}&  }
 \end{equation}
where $r_k$ is an arbitrary retraction of the $k$-th Ganea cofibration.
\end{theorem}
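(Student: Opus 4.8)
The statement is an assertion about the Ganea tower, and the only tool we have for comparing retractions without knowing the cofibers is Proposition~\ref{prop:Ganea}. So the plan is to induct on $n$ (for fixed $k$ and $m$), using Proposition~\ref{prop:Ganea} at each step to "transport" the commutativity of the diamond one level up the tower. The base case should be $n=k+m$, and the inductive step should pass from $n$ to $n+1$. Throughout, the cofibration to feed into Proposition~\ref{prop:Ganea} is the $(m+1)$-st Ganea cofibration $G_{m+1}X\rTo G_mX\rTo C_mX$, with $\iota$ the inclusion of the fiber $F$ of $G_mX\rTo C_mX$ — recall $q_{m+1}$ factors through this $F$ by Definition~\ref{def2}.

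**Setting up the base case.** First I would treat $n=k+m$. Here I want to show $q_m\,\iota_m^{k+m} \simeq q_m\,r_k\,\iota_k^{k+m}$. The idea is to apply Proposition~\ref{prop:Ganea} with $A\rTo X\rTo C$ taken to be $G_{m+1}X\rTo G_mX\rTo C_mX$ — no wait, that has the wrong shape; the proposition wants a cofibration $A\to X\to C$ and a retraction $r\colon X\to A$. The correct reading: take the cofibration to be the $0$-th Ganea cofibration or, more usefully, apply the proposition iteratively level-by-level. Concretely, at the $j$-th stage I apply Proposition~\ref{prop:Ganea} to the cofibration $G_{j+1}X\rTo^{\iota_{j+1}} G_jX\rTo C_jX$ (so $A=G_{j+1}X$, $X_{\text{prop}}=G_jX$, $C=C_jX$), with the fibration being $F\rTo E\rTo B$ chosen appropriately, and with the map "$g$" being the relevant composite into the fiber. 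The retraction "$r$" in the proposition is built from $r_k$ together with the tower maps. Then the equivalence "$ig=igar \Leftrightarrow g\iota = gar\iota$" lets me convert an identity that holds after composing with $\iota_{j+1}$ into one that holds on all of $G_jX$, or vice versa — this is exactly the mechanism needed to climb the tower.

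**The inductive step and the obstacle.** For the step $n\to n+1$: assuming the diamond commutes at level $n$, I precompose everything with $\iota_{n+1}\colon G_{n+1}X\to G_nX$, which immediately gives commutativity of the level-$(n+1)$ diamond but with the "wrong" maps $\iota_m^n\iota_{n+1}=\iota_m^{n+1}$ and $\iota_k^n\iota_{n+1}=\iota_k^{n+1}$ — actually this is already the right diagram, so the inductive step on $n$ is nearly trivial once the base case is established. Hence the real content is entirely in the base case $n=k+m$, and within that, in iterating Proposition~\ref{prop:Ganea} up the $m+1,\dots$ portion — or rather the $k$-many stages — of the Ganea tower. The main obstacle I anticipate is bookkeeping: correctly identifying, at each of the $k$ stages, which map plays the role of "$g$" (it will be something like the factorization of $q_{j}$ through the fiber $F_j$ composed with a retraction), verifying the hypothesis "$ig = igar$" at the bottom stage (this is where $\indcocat X\le k$ and the existence of $r_k$ enters, giving the needed identity after retracting $k$ times), and then propagating it. A secondary subtlety is ensuring the retractions at different levels are compatible, i.e. that "$r$" in the proposition genuinely satisfies $ra=1$; this follows because $r_k$ is a genuine retraction and the $\iota_j$ compose, but it needs to be spelled out. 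Once the chain of $k$ applications of Proposition~\ref{prop:Ganea} delivers the base case, the theorem follows by the trivial induction on $n$.
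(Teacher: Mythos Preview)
Your induction on $n$ is fine but, as you yourself note, vacuous: all the content sits in the base case $n=k+m$. The gap is in your plan for that base case. You propose to apply Proposition~\ref{prop:Ganea} with the cofibration $A\to X_{\mathrm{prop}}\to C$ taken to be $G_{j+1}X\to G_jX\to C_jX$, so that $A=G_{j+1}X$. This fails on two counts. First, $\iota_{j+1}\colon G_{j+1}X\to G_jX$ is (up to homotopy) the inclusion of the \emph{fibre} of $G_jX\to C_jX$, not a cofibration into $G_jX$; the $j$-th Ganea cofibration is $X\to G_jX\to C_jX$. Second, and fatally, Proposition~\ref{prop:Ganea} demands a retraction $r\colon G_jX\to A=G_{j+1}X$ with $r\,\iota_{j+1}=1$. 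The only candidate you can build from $r_k$ and the tower maps is $q_{j+1}\,r_k\,\iota_k^{\,j}$, but then $r\,\iota_{j+1}=q_{j+1}\,r_k\,\iota_k^{\,j+1}$, and asserting this equals $1_{G_{j+1}X}$ is exactly the theorem for the pair $(m,n)=(j+1,j+1)$, which lies outside the range $n\geq k+m$ unless $k=0$. So the retraction hypothesis cannot be verified without circularity, and your ``$k$-many stages'' never get off the ground.

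The paper's proof instead inducts on $m$. The base case $m=0$ is free because $G_0X=CX$ is contractible. For the step one applies Proposition~\ref{prop:Ganea} once, taking the cofibration to be the $(n-1)$-st Ganea cofibration $X\to G_{n-1}X\to C_{n-1}X$ (so $A=X$, the original space), the retraction to be $r=r_k\,\iota_k^{\,n-1}\colon G_{n-1}X\to X$ (a genuine retraction since $r_kq_k=1_X$ and $\iota_k^{\,n-1}q_{n-1}=q_k$), the fibration to be $G_{m+1}X\to G_mX\to C_mX$, and $g=\iota_{m+1}^{\,n-1}$. The fibre inclusion $\iota$ in the proposition is then $\iota_{n-1}^{\,n}\colon G_nX\to G_{n-1}X$, and the biconditional reads
\[
\iota_m^{\,n-1}=q_m\, r_k\,\iota_k^{\,n-1}\ \Longleftrightarrow\ \iota_{m+1}^{\,n}=q_{m+1}\, r_k\,\iota_k^{\,n},
\]
which is precisely the passage from $(m,n-1)$ to $(m+1,n)$. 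This is $m$ applications of the proposition, not $k$; the role of $k$ is solely to guarantee that $r_k\,\iota_k^{\,n-1}$ is an honest retraction of $q_{n-1}$ once $n-1\geq k$.
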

\begin{proof}
The proof follows by induction.  Since $G_0X = CX$, the result is trivial for $m=0$. Now,  $n-1 \geq k$,
and we apply Proposition \ref{prop:Ganea} to the $(n-1)$-th Ganea cofibration, to the fibration $G_{m+1}X\rTo^{\iota_{m}^{m+1}} G_{m}X\rTo^{j_m} C_m$, to $g = \iota_{m+1}^{n-1}, $ and to  $r= r_k \iota_{k}^{n-1}$  as a retraction of $q_{n-1}$. That is, to  the following diagram:
\begin{equation*}\label{general}
         \xymatrix{
&X \ar[d]_{q_{n-1}}  &&G_{m+1}X\ar[d]^{\iota_{m}^{m+1}} \\
G_nX \ar[r]^{\iota_{n-1}^n} &  G_{n-1}X \ar@/_/[u]_{r_k \iota_{k}^{n-1}}\ar[d]_{ j_{n-1}}  \ar[rru]_{\iota_{m+1}^{n-1} } &&G_mX \ar[d]^{j_m}\\
 & C_{n-1}  && C_m  }
 \end{equation*}
Then, $$\iota_{m}^{m+1} \iota_{m+1}^{n-1} =  \iota_{m}^{m+1} \iota_{m+1}^{n-1} q_{n-1}r_k \iota_{k}^{n-1}
 \Leftrightarrow   \iota_{m+1}^{n-1}\iota_{n-1}^n = \iota_{m+1}^{n-1}   q_{n-1}  r_k \iota_{k}^{n-1} \iota_{n-1}^n, $$
or equivalently
$$\iota_{m}^{n-1}  = q_m r_k i_k^{n-1}  \Leftrightarrow   \iota_{m+1}^{n}= q_{m+1}r_k \iota_k^{n}. $$

\end{proof}

Using this result we can now prove Theorem \ref{th-1}:

\begin{proof}[Theorem \ref{th-1}]
Let $m = \operatorname{sup}_p \{\indcocat X_{(p)}\}$ and let $k = \indcocat X_{(0)}$.  Choose, for every prime $p$, an arbitrary retraction of the $m$-th Ganea cofibration
$$\{\tau_p: G_m (X_{(p)}) \rightarrow X_{(p)}  \mid p  \ \text{ prime}\}$$
and fix $r_k: G_k(X_{0}) \rightarrow X_{(0)}$ a retraction for $X_{(0)}$.
Following Theorem \ref{th:sections} applied to $X_{(0)}$ for $n = k+m$, we have the the following equality
\begin{equation*}
 \iota_{m}^{n}= q_{m}r_k \iota_k^{n},
\end{equation*}
where maps and notations are the same as in (\ref{diamante}). Notice that maps in this case are rational.
Hence,
$$(\tau_p)_{(0)}\iota_{m}^{n}= (\tau_p)_{(0)}q_{m}r_k \iota_k^{n} = r_k \iota_k^{n},$$
since by Theorem \ref{toomer}, $ (\tau_p)_{(0)}q_{m} = Id_{X_{(0)}}$.
So, the rationalisations of the considered retractions  are homotopically equivalent  when composed with $\iota_m^{n}$ where $ n = m+k$.
Then, by Lemma \ref{lemilla}, $\indcocat X \leq m+k$.
\end{proof}

\section{Appendix: the monodromy action of a fibration}\label{appendix}
The aim of this appendix is to define the monodromy action associated to a fibration in a way it extends the classical monodromy action associated to covering maps, and to dualise the contents of \cite[p.\ 442]{BerHil}. That is, given a fibration $F \rightarrow E \rightarrow B$, and an arbitrary space $X$, we define an action of the group
$[X, \Omega B]$ on the set $[X, F]$, and relate the orbits of this action with the Barrat-Puppe exact sequence associated to the fibration. Although we could not find any reference addressing the contents of this section, we claim no original result in this part.

Let  $ p: E \rightarrow B$ be an arbitrary map. The homotopy fiber of $p$ is the following pullback
$$\xymatrix{
F= PB \times_B E \ar[r]^-{\pi_2}  \ar[d]_{\pi_1} &  E \ar[d]^{p}    \\
PB \ar[r]^{\epsilon_1}&  B } $$
where $F = \{ (w, \ e) \mid w (1) = p (e) \}.$ Let $X$ be an arbitrary space and $\bullet$ denote the adjunction of paths, then there exists an action of the group $[X, \Omega B]$ on $[X, F]$ (the monodromy action)
$$\begin{matrix}
[X, \Omega B]\times [X, F]&\rightarrow & [X, F]\\
(\epsilon,g) & \mapsto & \epsilon \vdash g
\end{matrix}$$ given by
\begin{equation}\label{operation}
(\epsilon \vdash g)(x) = (\epsilon \vdash (g', g'')) (x) = (g'(x) \bullet \epsilon (x), g''(x)),
\end{equation}
where $g'=\pi_1 g$, and $g''=\pi_2 g$.

Notice this action is natural by construction, that is given $f\colon X\rightarrow Y$ then $(\epsilon \vdash g)f=\big( (\epsilon f) \vdash (gf)\big)$.
The following result describes the $[X, \Omega B]$-orbits in $[X, F]$.

\begin{theorem}\label{thoperation} Let $ \xymatrix{F = PB \times_B E \ar[r]^-i  &  E \ar[r]^{p} & B}$ be a fibration and let the Barrat Puppe exact sequence:
 $$ \xymatrix{[X, \Omega B] \ar[r]  & [X, F]   \ar[r]^{i_\ast} & [X, E] \ar[r]^{p_\ast}&  [X, B].}$$
 Then, for $g_1, g_2 \in [X, F]$ we have that $i_\ast (g_1) = i_\ast (g_2)$ if and only if there exists $\epsilon \in [X , \Omega B]$ such that $(\epsilon \vdash  g_1) = g_2$.
\end{theorem}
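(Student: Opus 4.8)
The plan is to prove the two implications of the equivalence separately, using throughout that the map $i$ is the second projection $\pi_2\colon F=PB\times_B E\to E$ coming from the pullback description of $F$.

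\emph{The implication $(\Leftarrow)$.} Suppose $g_2=\epsilon\vdash g_1$ in $[X,F]$ and choose representatives. Reading off the defining formula \eqref{operation}, the value $(\epsilon\vdash g_1)(x)=\big(g_1'(x)\bullet\epsilon(x),\,g_1''(x)\big)$ has exactly the same $E$-coordinate as $g_1(x)=(g_1'(x),g_1''(x))$, so $i\circ(\epsilon\vdash g_1)=\pi_2\circ(\epsilon\vdash g_1)=\pi_2\circ g_1=i\circ g_1$ on the nose. Since the action is well defined on homotopy classes (as recorded in the appendix), this yields $i_\ast(g_2)=i_\ast(\epsilon\vdash g_1)=i_\ast(g_1)$.

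\emph{The implication $(\Rightarrow)$.} First note that $\pi_2\colon F\to E$ is a fibration, being the pullback along $p$ of the path fibration $\epsilon_1\colon PB\to B$, and it is a pointed map. Assume $i_\ast(g_1)=i_\ast(g_2)$ and pick a pointed homotopy $H\colon X\times I\to E$ with $H_0=\pi_2 g_2$ and $H_1=\pi_2 g_1$. Using that the inclusion $(X\times\{0\})\cup(\{\ast\}\times I)\hookrightarrow X\times I$ is a cofibration together with the homotopy lifting property of $\pi_2$, lift $H$ to $\widetilde H\colon X\times I\to F$ which equals $g_2$ on $X\times\{0\}$, is constant at the basepoint of $F$ on $\{\ast\}\times I$, and satisfies $\pi_2\widetilde H=H$. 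Replacing $g_2$ by the homotopic map $\widetilde H_1$ we may, and do, assume $\pi_2 g_1=\pi_2 g_2$; write $e\colon X\to E$ for this common map, so that $g_1=(g_1',e)$ and $g_2=(g_2',e)$ with $g_i'\colon X\to PB$ and $\epsilon_1 g_i'=pe$.

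Now define $\epsilon\colon X\to\Omega B$ by letting $\epsilon(x)$ be the loop that traverses $g_2'(x)$ and then the reverse of $g_1'(x)$; since both $g_1'(x)$ and $g_2'(x)$ run from the basepoint of $B$ to $p(e(x))$, this is a genuine based loop depending continuously on $x$, hence a class in $[X,\Omega B]$. It remains to check that $\epsilon\vdash g_1=g_2$ in $[X,F]$: by \eqref{operation}, $(\epsilon\vdash g_1)(x)$ has $E$-coordinate $e(x)$ and $PB$-coordinate $g_1'(x)\bullet\epsilon(x)$, which, by the standard path-cancellation homotopy $w\bullet\overline w\simeq\mathrm{const}$ carried out rel endpoints and continuously in $x$, is homotopic to $g_2'(x)$; keeping the $E$-coordinate fixed at $e$ while running this homotopy produces a pointed homotopy inside $F$ from $\epsilon\vdash g_1$ to $g_2$. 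The step I expect to be the main obstacle is precisely this last verification: one must perform the cancellation homotopy simultaneously over all $x\in X$ while ensuring that every intermediate map genuinely lands in $F=PB\times_B E$ (the moving path must keep its terminal point equal to $p(e(x))$ for all parameters) and respects the basepoint, and one must confirm that the concatenation conventions used in \eqref{operation} and in the definition of $\epsilon$ are the compatible ones. The homotopy-lifting reduction in $(\Rightarrow)$ is routine once $\pi_2$ is recognised as a fibration, but it too has to be done in the pointed setting via the cofibration/fibration lifting lemma.
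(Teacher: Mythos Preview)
Your proof is correct, and you even supply the easy $(\Leftarrow)$ direction that the paper leaves implicit. For $(\Rightarrow)$ your route differs from the paper's: you first invoke the homotopy lifting property of $\pi_2\colon F\to E$ (pulled back from the path fibration) to reduce to the case $\pi_2 g_1=\pi_2 g_2=e$, after which $\epsilon$ is simply the loop $g_1'^{-1}\bullet g_2'$ and the final verification is a one-coordinate path cancellation with the $E$-coordinate held fixed. The paper instead works directly with the homotopy $H\colon g_1''\simeq g_2''$ in $E$, building it into the loop via $\epsilon(x)=g_1'(x)^{-1}\bullet(pH_x)^{-1}\bullet g_2'(x)$ and then writing down an explicit two-coordinate homotopy $G\colon X\times I\to F$ whose $E$-component is $H$ itself. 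Your approach cleanly separates the fibration-theoretic step from the path algebra and yields a shorter $\epsilon$; the paper's approach is more self-contained (no appeal to HLP) at the cost of a longer formula and the extra homotopy $G$. Your caveat about concatenation conventions is well placed: in the paper's convention $w\bullet v$ traverses $v$ first, so your $\epsilon$ should be read as $g_1'^{-1}\bullet g_2'$, and then $g_1'\bullet\epsilon$ cancels to $g_2'$ rel endpoints exactly as you claim.
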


\begin{proof} Let us write $g_i = (g_i', g_i'')$, for $i= 1, 2$. Since
$i_\ast (g_1) = i_\ast (g_2)$, we have that $g_1'' \simeq_H g_2''$ with $H : X \times I \rightarrow E$.
Define $\epsilon: X \rightarrow  \Omega B$ by
$$\epsilon(x)= g'_1(x)^{-1}\bullet(pH_x)^{-1} \bullet g'_2(x).$$

We now prove that $\epsilon \vdash g_1 $ is homotopic to $g_2$. By definition we have
$$(\epsilon \vdash g_1 ) (x) =  \Big( g'_1(x)  \bullet \big(g'_1(x)^{-1}\bullet(pH_x)^{-1} \bullet g'_2(x)\big) ,  \ g_1''(x)\Big)$$
which is clearly homotopic to the map defined by $$\phi(x) =  \Big( (pH_x)^{-1} \bullet g'_2(x), \ g_1'' (x)\Big).$$ Finally, $\phi$ is homotopic to $g_2$ by the homotopy $\xymatrix{G={(G', G'')}\colon X \times I \ar[r] & F}$ defined by
\begin{align*}
G'(x,s)(t) &= (pH_x)^{-1} ((1-s)t)  \bullet g'_2(x)\\
G''(x,s) & = H_x(s)
\end{align*}
Notice that $G$ is well defined over the fiber, since $p(G''(x,s)) =G'(x,s)(1)$.
\end{proof}



\begin{thebibliography}{99}

\bibitem{A} M.\ Arkowitz, {\em Equivalent definitions of the Ganea fibrations and cofibrations}, Manuscripta Math.\ \textbf{100} (1999), no. 2, 221--229

\bibitem{BerHil} I.\ Berstein, and P.J.\ Hilton, {\em Category and generalized Hopf invariants}, Illinois J.\ Math \textbf{4} (1960), 437--451

\bibitem{BauKan} A.K.~Bousfield, and D.M.~Kan, {\em Homotopy {L}imits, {C}ompletions and {L}ocalizations}, Springer Verlag,  304, Lecture Notes in Math., 1972

\bibitem{Cor} {O.\ Cornea}, {\em {L}usternik-{S}chnirelmann categorical sections}, {Ann.\ Sci.\ \'Ecole Norm.\ Sup.\ (4)}, {{\textbf 28}}, (1995), 689--704

\bibitem{Cos1} {C.\ Costoya}, {\em Spaces in the {M}islin {G}enus of a finite, simply connected co-${H}_0$-space}. {{L}usternik-Schnirelmann {C}ategory and {R}elated {T}opics}, Contemp.\ Math., AMS,  \textbf{316}, (2002), 65--72

\bibitem{Cos2} {C.\ Costoya}, {\em Cat\'egorie de {L}usternik-{S}chnirelmann et genre des ${H}_0$-espaces}, {Proc.\ Amer.\ Math.\ Soc.} \textbf{131} (2003), 637--645

\bibitem{Cos3} {C.\ Costoya}, {\em co-$H$-spaces and Localization}, {Isr.\ Journal of Math},  \textbf{180} (2010), 69--92

\bibitem{Dror} E.~Dror, {\em A generalization of the Whitehead theorem.} Symposium on Algebraic Topology (Battelle Seattle Res. Center, Seattle, Wash., 1971), pp. 13--22. Lecture Notes in Math., Vol. \textbf{249}, Springer, Berlin, 1971

\bibitem{DDK} E.~Dror, W.G.~Dwyer and D.M.~Kan, {\em An arithmetic square for virtually nilpotent spaces}, Illinois J.\ Math. \textbf{21} (1977), 242--254

\bibitem{Ga1} T.~Ganea, {\em Lusternik-Schnirelmann category and cocategory}, Proc.~London Math.~Soc.~{\bf 10} (1960) 623--639

\bibitem{Ga1a} T.~Ganea, {\em Fibrations and cocategory}, Comment.~Math. Helvetici.~{\bf 35} (1961) 15--24

\bibitem{Ga1b} T.~Ganea, {\em On some numerical homotopy invariants}. Proc.\ Internat.\ Congr.\ Mathematicians {\bf 10} (1962) 467--473, Inst.\ Mittag-Leffler, Djursholm

\bibitem{Ga2} T.~Ganea, {\em A generalization of the homology and homotopy suspension}, Comment.~Math.~Helvetici {\bf 39} (1964-1965), 295--322

\bibitem{Ga3} T.~Ganea, {\em Lusternik-Schnirelmann category and strong category}, Illinois.~J.~Math.~\textbf{11} (1967), 417–-427

\bibitem{Ga4} T.~Ganea, {\em Some problems on numerical homotopy invariants.} Symposium on Algebraic Topology (Battelle Seattle Res.\ Center, Seattle, Wash., 1971), pp. 23--30. Lecture Notes in Math., Vol.\ 249, Springer, Berlin, 1971

\bibitem{HMR} P.~Hilton, G.~Mislin, and J.~Roitberg, {\em Localization of nilpotent groups and spaces}. North-Holland Mathematics Studies, No.~15, Notas de Matem{\'a}tica, No.~55. 1975

\bibitem{Hop} M.J.~Hopkins, {\it Formulations of cocategory and the iterated suspension}. In Ast{\'e}risque (Homotopie alg{\'e}brique et alg{\'e}bre locale) {\bf 113-114} (1984), 238--246

\bibitem{Hov} M.\ Hovey, {\it Lusternik-Schnirelmann cocategory}, Illinois J.\ Math.\ \textbf{37} (1993), no. 2, 224--239

\bibitem{McG} C.A.~McGibbon, {\em On the localization genus of a space}. Algebraic topology: new trends in localization and periodicity, Progr.\  Math.\ \textbf{136}, pp. 285–-306, Birkh{\"a}user Verlag, Basel, 1996

\bibitem{Mislin} G.~Mislin, {\em The genus of an $H$-space}. Symposium on Algebraic Topology (Battelle Seattle Res.\ Center, Seattle, Wash., 1971), pp. 75–-83. Lecture Notes in Math., Vol.\ 249, Springer, Berlin, 1971

\bibitem{MV} A.\ Murillo, and A.\ Viruel, {\it Lusternik-Schnirelmann cocategory: A Whitehead dual approach}. Cohomological Methods in Homotopy Theory, Progress in Math.\ \textbf{196}, Birkh{\"a}user Verlag (2001), 323--347

\bibitem{Pan} J.~Pan, {\em Homotopy localization and $H$-spaces}, The Bulletin of the London Mathematical Society \textbf{34} (2002), 677–-680

\bibitem{Roi1} J.~Roitberg, {\em The {L}usternik-{S}chnirelmann Category of Certain Infinite {CW}-Complexes}, Topology \textbf{39} (2000), 95--101

\bibitem{Roi2} J.~Roitberg, {\em The product formula for Lusternik-Schnirelmann category}, Algebraic \& Geometric Topology \textbf{1} (2001), 491--502

\bibitem{Sta} D.~Stanley, {\em Spaces with Lusternik-Schnirelmann category $n$ and cone length $n+1$}, Topology \textbf{39} (2000), 985--1019

\bibitem{Too} G.H.~Toomer, {\em Topological localization, category and cocategory}, Canad.\ J.\ Math.\ \textbf{27} (1975), 319--322

\bibitem{Zab1} A.~Zabrodsky, {\em On the genus of finite $CW$-complexes},  Comment.\ Math.\  Helv, \textbf{49}  (1974) 48--64

\bibitem{Zab2} A.~Zabrodsky, {\em p equivalences and homotopy type.} Localization in group theory and homotopy theory, and related topics (Sympos., Battelle Seattle Res.\ Center, Seattle, Wash., 1974), pp. 161-–171. Lecture Notes in Math., Vol. 418, Springer, Berlin, 1974

\bibitem{Zab3} A.~Zabrodsky, {\em Hopf Spaces}. North Holland Mathematics Studies, Vol.\ 22, North-Holland, Amsterdam-NewYork-Oxford, 1976
\end{thebibliography}
\end{document}